\title[Learning-based State Reconstruction for a Scalar Hyperbolic PDE]{Learning-based State Reconstruction \\ for a Scalar Hyperbolic PDE under noisy Lagrangian Sensing}
\def\set@curr@file#1{\def\@curr@file{#1}} 
\newcommand{\R}{\mathbb{R}}
\newcommand{\C}{\mathcal{C}}
\renewcommand{\H}{\mathcal{H}}
\newcommand{\D}{\mathcal{D}}
\newcommand{\F}{\mathcal{F}}
\newcommand{\G}{\mathcal{G}}
\renewcommand{\L}{\mathcal{L}}
\DeclareMathOperator*{\Argmin}{argmin}
\DeclareMathOperator*{\ReLu}{ReLu}
\newtheorem{theo}{Theorem}
\newtheorem{prop}{Proposition}
\author{%
 \Name{Matthieu Barreau} \Email{barreau@kth.se}\\
 \Name{John Liu} \Email{johnliu@kth.se}\\
 \Name{Karl Henrik Johansson} \Email{kallej@kth.se}\\
 \addr Division of Decision and Control Systems, KTH Royal Institute of Technology Stockholm, Sweden%
}
\begin{document}

\maketitle


\begin{abstract}
    The state reconstruction problem of a heterogeneous dynamic system under sporadic measurements is considered. This system consists of a conversation flow together with a multi-agent network modeling particles within the flow. We propose a partial-state reconstruction algorithm using physics-informed learning based on local measurements obtained from these agents. Traffic density reconstruction is used as an example to illustrate the results and it is shown that the approach provides an efficient noise rejection.
\end{abstract}

\begin{keywords}%
  state reconstruction, hyperbolic PDE, Lagrangian sensing, noise rejection, physics-informed deep learning%
\end{keywords}

\section{Introduction}

Traffic jams are a source of green gas and time-consumption \citep{ferrara2018}. One solution to lighten the effect of congestion is to control the flow of vehicles. There exist a large variety of control strategies using variable speed limits \citep{delle2017traffic}, ramp control \citep{zhang2019pi} or autonomous vehicles \citep{PIACENTINI201813,wu2017flow}. These laws require knowledge of road density everywhere. There comes the need for an efficient state reconstruction algorithm using measurements from fixed sensors but also capable of dealing with GPS measurements \citep{amin2008mobile} and Probe Vehicles (PVs) sensing \citep{herrera2010incorporation}.

This is challenging from a theoretical point of view since such a system is modeled by a cascaded partial differential equation with a multi-agents network (C-PDE-MA) such as the one derived by me\cite{borsche2012mixed}. There exist several kinds of algorithms to estimate the density and one can refer to the survey by \cite{seo2017traffic} for an overview. For fixed sensors, one can investigate the use of backstepping observers \citep{smyshlyaev2005backstepping}. However, these cannot deal effectively with nonlinearities and one has to approximate the system dynamic depending on its regime \citep{YU2019183}. Using PVs, there exist some open-loop state reconstruction algorithms proposed for instance by \cite{delle2019traffic,barreau2020dynamic,cicic2020numerical}. They are copying the system and matching the boundary conditions, consequently, they are not robust to model mismatch or noise. Another recent field of research is to use machine learning tools to estimate the density \citep{matei2019inferring,huang2020physics,liu2020learning}, no matter the origin of the measurement. They are then more flexible and they lead to a good reconstruction providing substantial computer resources. 

This paper goes in that latter direction and extends the preliminary observations made in \cite{liu2020learning}. One contribution of this article is to define a partial-state reconstruction (P-SR) expressed as the solution to an optimization problem. We show that a neural network is capable of solving the noisy state reconstruction problem asymptotically and we explain how two neural networks with a dense architecture are well-suited to that problem. We also propose a training procedure that is robust with a moderate computational burden.

The outline of the paper is as follows. In Section~2, we describe the model C-PDE-MA and give some fundamental properties about the existence and regularity of solutions. We end this section by giving a general definition of state reconstruction. Similarly to what is proposed by \cite{ostrometzky2019physics}, Section~4 is dedicated to the neural network solution with a particular focus on the numerical implementation. Finally, we consider the traffic state reconstruction problem, and simulations show a good reconstruction accuracy, even in the noisy case. The conclusion enlarges the scope of the article by providing some perspectives.

\textbf{Notation:} We define $L_{loc}^1(\mathcal{S}_1, \mathcal{S}_2)$, $L^{\infty}(\mathcal{S}_1, \mathcal{S}_2)$ and $C^k(\mathcal{S}_1, \mathcal{S}_2)$ as the spaces of locally integrable functions, bounded functions and continuous functions of class $k$ from $\mathcal{S}_1$ to $\mathcal{S}_2$, respectively. A function in $C^k_p$ is piecewise-$C^k$. Let $f$ be a function defined in a neighborhood of a point $x$, we define $f(x^-)$ as $\lim_{s \to x, s < x} f(s)$. For $g$ a function defined in a neighborhood of $x$ with $g(x) \neq 0$, we write $f = o_x(g)$ if $\lim_{s \to x} \frac{f}{g}(s) = 0$.

\section{Problem Statement}

This section introduces some fundamental notions about the PDE model, the ODE model, and their coupling. It is followed by the problem considered in this paper.
Theoretical questions regarding the existence and regularity of solutions to these models are also briefly discussed.

\subsection{PDE model}

To define our problem, we will use the notation introduced by \cite{bastin2016stability}. We are interested in the following scalar hyperbolic and quasi-linear PDE for $(t,x) \in \R^+ \times \R$:
\begin{equation} \label{eq:PDE}
    \frac{\partial \rho}{\partial t} (t,x) + F(\rho(t,x)) \frac{\partial \rho}{\partial x}(t,x) = 0
\end{equation}
with initial condition $\rho(0,\cdot) = \rho_{0} \in L^{\infty}(\R, [0,1])$ is piece-wise $C^{\infty}$. In physical systems, $\rho: \R^+ \times \R \to [0, 1]$ usually refers to the normalized density. Equation~\eqref{eq:PDE} is then the macroscopic expression of the conservation of mass.

We assume that 
\begin{enumerate}
    \item $F: \R \to \R$ is of class $C^1(\R, \R)$, 
    \item The flux function $f(\rho) = \int_0^{\rho} F(s) ds$ is concave,
    \item $f(\rho) = V_f \rho + o_0(\rho)$, where $V_f > 0$ is the free flow speed.
\end{enumerate}
\begin{prop}
    There exist weak solutions to this system with the following regularity:
    \begin{equation} \label{eq:existenceSpace}
        \exists T > 0, \quad \rho \in C^0(\R^+, L^1_{loc}(\R, [0,1])) \text{ and piece-wise } C^{\infty}.
    \end{equation}
\end{prop}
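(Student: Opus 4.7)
The plan is to handle the two regularity claims separately. The continuity $\rho \in C^0(\R^+, L^1_{loc})$ will follow from Kruzhkov's classical entropy theory applied to the conservative form of \eqref{eq:PDE}, while the piecewise-$C^\infty$ structure is obtained by the method of characteristics combined with Riemann-problem resolutions at the discontinuities of $\rho_0$, exploiting the concavity of $f$.

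For the first claim, note that $F \in C^1$ together with $f' = F$ gives $f \in C^2$, and \eqref{eq:PDE} can be rewritten in conservative form $\partial_t \rho + \partial_x f(\rho) = 0$ by the chain rule. Kruzhkov's theorem then provides a unique weak entropy solution $\rho \in L^{\infty}(\R^+ \times \R)$ satisfying a Lipschitz-in-time $L^1_{loc}$ estimate $\|\rho(t,\cdot) - \rho(s,\cdot)\|_{L^1(K)} \leq C_K\,|t-s|$ for any compact $K \subset \R$, which yields the $C^0(\R^+, L^1_{loc})$ regularity. The pointwise bound $\rho(t,x) \in [0,1]$ follows from the maximum principle: the constants $0$ and $1$ are stationary entropy solutions, so the $L^1$-contraction/comparison principle of Kruzhkov preserves the initial range.

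For the piecewise-$C^\infty$ regularity I would construct the solution explicitly. Since $\rho_0$ is piecewise $C^\infty$ with locally finitely many discontinuities $\{x_i\}$, in each smooth region the characteristics $\frac{dx}{dt} = F(\rho_0(x_0))$ are $C^\infty$ curves along which $\rho$ is constant, yielding a $C^\infty$ solution until characteristics reach, or collide to form, a shock; the resulting shock curve satisfies the Rankine-Hugoniot ODE with $C^1$ right-hand side and is therefore $C^\infty$ wherever well-defined. At each initial discontinuity $x_i$ I solve the Riemann problem with states $\rho_0(x_i^-),\rho_0(x_i^+)$: concavity of $f$ forces the admissible solution to consist of either a single entropy shock or a single self-similar rarefaction fan of the form $\rho(t,x) = g((x-x_i)/t)$ with $g$ determined by $F$, both piecewise $C^\infty$. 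Patching these local pieces together and invoking Kruzhkov uniqueness identifies the construction with the weak solution of the first step.

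The main obstacle is the global combinatorial control of the discontinuity set over time: shocks may interact among themselves or with rarefaction fans, which in general could create a complicated singular structure. Concavity of $f$ is decisive here, since it prevents the spontaneous creation of new shocks away from existing singular curves and implies that interactions can only merge shocks. Consequently, the number of discontinuities on any compact $x$-interval remains finite for some positive time $T$, which is precisely what justifies the existential $T > 0$ in the statement and guarantees that the piecewise-$C^\infty$ structure is well-defined on $[0,T]$.
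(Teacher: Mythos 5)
Your proposal is essentially correct and lands on the same classical theory, but it takes a more constructive route than the paper, which simply cites Dafermos (Theorems~6.2.2 and~11.3.10) for the regularity, the generalized-characteristics/maximum-principle argument for the bound $\rho \in [0,1]$, and Theorem~14.10.2 for uniqueness under the Lax entropy condition. Your use of Kruzhkov's theorem for the conservative form $\partial_t \rho + \partial_x f(\rho) = 0$ cleanly delivers both the $C^0(\R^+, L^1_{loc})$ continuity (via the Lipschitz-in-time $L^1_{loc}$ estimate) and the invariance of $[0,1]$ (via comparison with the constant solutions $0$ and $1$), which is arguably tidier than the paper's split between two separate references. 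For the piecewise-$C^\infty$ part, your characteristics-plus-Riemann-problem construction is the standard picture underlying Dafermos's generalized-characteristics machinery; what the citation buys the paper is precisely the global structural control of the shock set that your sketch glosses over. One claim you make is wrong as stated: concavity of $f$ does \emph{not} prevent the spontaneous creation of new shocks away from existing singular curves --- gradient catastrophe produces fresh shocks inside smooth regions wherever the data is compressive (here, wherever $\rho_0$ is increasing, since $F = f'$ is decreasing). This is repairable because the statement only asserts $\exists T > 0$: on a short enough time interval (determined by a local lower bound on the derivative of $\rho_0$ on each smooth piece), no gradient catastrophe occurs and the waves emanating from the initial discontinuities do not interact, so the patched construction is piecewise $C^\infty$ on $[0,T]$. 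If you wanted the structure for all time, you would need the genericity-type results (Schaeffer's theorem, or Dafermos Chapter~11), since for exceptional smooth data the shock set of a genuinely nonlinear scalar law need not be locally finite.
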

The regularity result comes from a direct application of Theorems~6.2.2 and 11.3.10 by \citet{Dafermos:1315649}. The fact that $\rho \in [0, 1]$ is a consequence of the generalized characteristic methodology \cite[Chap. 4]{protter2012maximum}. Considering the Lax-E entropy condition $F(\rho^+) \leq F(\rho^-)$ guarantees that the solution is unique \cite[Theorem~14.10.2]{Dafermos:1315649}. Note that $f$ is concave, hence, the entropy condition turns out to be:
\begin{equation} \label{eq:entropy}
    \rho(t, x^-) \leq \rho(t, x^+).
\end{equation}

\subsection{Agents model}

Consider a network of $N > 1$ agents, located at $y_i \in \R$, $i \in \{1, \dots, N\}$ such that $y_1(0) < \dots < y_N(0)$. These agents are particles within a flow of local density $\rho$ defined in \eqref{eq:PDE}. In other words, $\rho(t, \cdot)$ can be seen as a probability distribution and the agents are a subset of a given realization. 

We assume first that the speed $V_i$ of agent $i$ depends only on the density at its location and that $V_i$ is a decreasing function of the density. Since the density might be discontinuous, we consider that the speed is the infimum in any neighborhood around its position. Because of the entropy condition \eqref{eq:entropy}, the velocity function writes as:
\[
    V_i(t) = \min \left\{ V (\rho(t, y_i(t)^-), V (\rho(t, y_i(t)^+)) \right\} = V (\rho(t, y_i(t)^+).
\]
The agent's dynamic is then modeled by the following ordinary differential equation (ODE):
\begin{equation} \label{eq:agents}
    \dot{y}_i(t) = V \left( \rho(t, y_i(t)^+) \right) \quad \text{ for } i \in \{1, \dots, N \}.
\end{equation}
Under the entropy condition~\eqref{eq:entropy}, one can show that the agents stay ordered in space, meaning that $y_1(t) < \dots < y_N(t)$ for any $t \in \R^+$. 
Since the number of particles between agents $1$ and $N$ is ${M(t) = \int_{y_1(t)}^{y_N(t)} \rho(t,x) dx}$ and a particle cannot overtake another one, the quantity $M$ remains constant and
\[
    \dot{M} = V(\rho_N) \rho_N - f(\rho_N) - V(\rho_1) \rho_1 + f(\rho_1) = 0,
\]
where $\rho_i(t) = \rho(t, y_i(t))$.
The previous expression must hold for any $\rho_1, \rho_N \in [0, 1]$ yielding:
\begin{equation} \label{eq:speed}
    V(\rho) = \left\{ \begin{array}{ll} f(\rho) \rho^{-1}, & \text{ if } \rho \in (0, 1], \\ V_f, & \text{ otherwise.} \end{array} \right.
\end{equation}
Because $f$ is concave and $f(0) = 0$, we get that $V$ is indeed a decreasing function of the density.
\begin{prop} There exists a unique absolutely continuous solution $y_i$ of \eqref{eq:agents} defined for $t \geq 0$.
\end{prop}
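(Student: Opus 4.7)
The plan is to combine the piecewise-$C^{\infty}$ regularity of $\rho$ granted by Proposition~1 with a patching argument on the smooth cells of the $(t,x)$-plane. Since $V\le V_f$ uniformly on $[0,1]$ by assumption~(3) and \eqref{eq:speed}, the right-hand side of \eqref{eq:agents} is bounded, which rules out finite-time blow-up and reduces the task to local existence and uniqueness on an arbitrary compact horizon $[0,T]$. On such an interval Proposition~1 produces only finitely many Lipschitz shock curves $\Gamma_1,\dots,\Gamma_K$, and these partition the relevant strip into finitely many open cells where $\rho$ is smooth.

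For existence I would construct $y_i$ cell by cell. Inside a smooth cell the map $y\mapsto V(\rho(t,y))$ is $C^{\infty}$ and $\rho^+=\rho$, so classical Cauchy--Lipschitz yields a unique $C^1$ arc through the current initial data. I prolong this arc until it meets a shock curve $\Gamma_k$ at some time $t^\star$. There, the Rankine--Hugoniot formula
\[
    \sigma = \frac{f(\rho^+)-f(\rho^-)}{\rho^+-\rho^-},
\]
together with the entropy inequality $\rho^-\le\rho^+$ from \eqref{eq:entropy} and the concavity of $f$ with $f(0)=0$ (which makes $V(\rho)=f(\rho)/\rho$ decreasing), gives after elementary algebra the decisive two-sided estimate $\sigma\le V(\rho^+)\le V(\rho^-)$. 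The lower bound shows that from the low-density side the agent travels strictly faster than the shock, so the meeting time $t^\star$ is finite; the middle bound shows that immediately after crossing $\Gamma_k$ the agent has residual speed $V(\rho^+)\ge \sigma$ in the high-density cell, hence the shock cannot overtake it and no re-crossing can occur. I continue the construction in the next cell with speed $V(\rho^+)$. The resulting concatenation is absolutely continuous with a kink at each shock crossing, and the one-sided convention $V_i=V(\rho^+)$ makes \eqref{eq:agents} hold almost everywhere. Finiteness of the shock list closes the induction on $[0,T]$.

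Uniqueness follows by comparing two candidates $y_i,\tilde y_i$ sharing the same initial value. Classical ODE uniqueness forces them to coincide inside each smooth cell, so the only potential freedom would be to dwell on some shock $\Gamma_k$ for a set of positive measure; but on such a set $\dot y_i = \sigma$ would hold identically while \eqref{eq:agents} demands $\dot y_i = V(\rho^+)$, and the strict inequality $\sigma<V(\rho^+)$ excludes this (the degenerate vacuum case $\rho^-=0$, where equality can occur, is checked separately). Consequently both candidates cross each shock at the same instant and re-enter the same smooth cell, and an induction over the finitely many shock interactions on $[0,T]$ gives $y_i\equiv\tilde y_i$, after which $T\to\infty$ yields the global conclusion. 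The main technical obstacle I would spell out carefully is exactly this rigorous justification that shock crossings are instantaneous in the absolutely continuous class; once the bound $\sigma\le V(\rho^+)$ is in place the remaining work is bookkeeping across the finite list of cells.
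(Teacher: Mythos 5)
Your argument is correct in substance but follows a genuinely different route from the paper. The paper's proof is two lines: it observes that concavity of $f$ with $f(0)=0$ and $f=V_f\rho+o_0(\rho)$ make $V$ in \eqref{eq:speed} a $C^1$ function on $[0,1]$, and then invokes the Carath\'eodory existence theorem (Coddington--Levinson, Thm.~1.1, Chap.~2) for the ODE \eqref{eq:agents} with a measurable-in-time, bounded right-hand side. Your construction instead exploits the piecewise-$C^\infty$ structure of $\rho$ directly: Cauchy--Lipschitz inside each smooth cell, plus the chord inequality $\sigma\le V(\rho^+)\le V(\rho^-)$ (which does follow from concavity of $f$, $f(0)=0$ and the entropy ordering $\rho^-\le\rho^+$) to control what happens at shock crossings. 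What your approach buys is a treatment of the point the paper's citation quietly glosses over: Carath\'eodory requires continuity of the right-hand side in $y$ for fixed $t$, which fails at shocks, and it gives existence rather than uniqueness; your transversality estimate is essentially the Filippov-type condition that restores both. What it costs is reliance on two facts you assert rather than prove: (i) that the shock set on a compact horizon consists of finitely many Lipschitz curves with no accumulation of interaction points (this is what ``piecewise smooth'' must be taken to mean, and it is exactly the regularity Proposition~1 imports from Dafermos); and (ii) the vacuum case $\rho^-=0$, which you defer --- there $\sigma=V(\rho^+)$ exactly, so an agent riding the shock satisfies \eqref{eq:agents} as well, and uniqueness is preserved not because dwelling is excluded but because the sliding dynamics and the post-crossing dynamics coincide; this deserves the one extra sentence rather than a parenthesis. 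With those two points made explicit, your proof is a complete and arguably more informative substitute for the paper's.
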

\begin{proof} From the definition of $f$, we get that $V \in C^1([0, 1], \R)$. The proposition results from the application of Carathéodory theorem \citep[Theorem~1.1, Chap~2]{coddington1955theory}. \end{proof}

\subsection{C-PDE-MA model}

In this paper, the network of agents is capable of sensing the PDE state at their locations. 
The C-PDE-MA model is illustrated in Figure~\ref{fig:generalProblem2}. It is represented by a coupling between the PDE model~\eqref{eq:PDE} and the ODE model~\eqref{eq:agents} for $t \in \R^+$ and $x \in \R$:
\begin{equation} \label{eq:C-PDE-MA}
    \hspace*{-0.1cm}
    \begin{array}{clcl}
        \text{PDE} \!\!\!& \left\{ \begin{array}{l}
            \displaystyle \frac{\partial \rho}{\partial t} (t,x) + F(\rho(t,x)) \frac{\partial \rho}{\partial x}(t,x) = 0, \\
            \rho(0,x) = \rho_{0}(x), \\
            \tilde{\rho}(t) = \C_{\tilde{y}(t)}(\rho(t, \cdot)) + n_{\rho}(t), \\
        \end{array} \right. & \quad 
        \text{MA} \!\!\!& \left\{ \begin{array}{ll}
            \dot{\tilde{y}}(t) = V(\rho(t, \tilde{y}(t)^+)),\\
            \tilde{y}(0) = \tilde{y}_0, \\
            \tilde{w}(t) = \tilde{y}(t) + n_{y}(t), 
        \end{array} \right.
    \end{array}
\end{equation}
where $n_{\rho}, n_y \in L^{\infty}(\R^+, \R^N)$ are measurement noises, $V$ is the element-wise function defined in \eqref{eq:speed}, $\tilde{y}(t) = \left[ y_1(t) \ \cdots \ y_N(t) \right]^{\top}$ and $\tilde{w}(t) = \left[ w_1(t) \ \cdots \ w_N(t) \right]^{\top}$. The operator $\C_{\tilde{y}}: L^1_{loc}(\R, [0,1]) \to [0, 1]^{N}$ represents the way the agents are sensing, which ends up being:
\[
    \C_{\tilde{y}(t)}(\rho(t, \cdot)) = \left[ \rho(t, y_1(t)^+) \ \cdots \ \rho(t, y_N(t)^+) \right]^{\top}.
\]


\begin{figure}
	\centering
	\includegraphics[height=4cm]{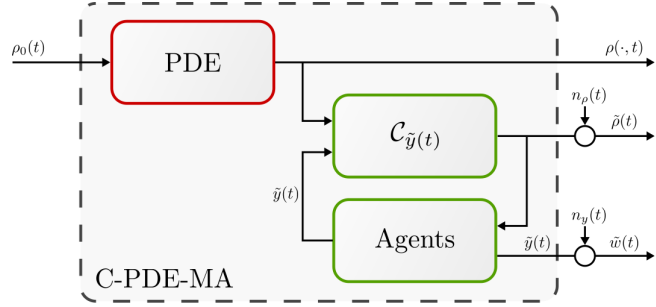}  
	\vspace*{-0.3cm}
	\caption{Block diagram of system~\eqref{eq:C-PDE-MA}.}
	\vspace*{-0.3cm}
	\label{fig:generalProblem2}
\end{figure}




\subsection{Problem formulation}

We are interested in this paper in the reconstruction of the PDE state $\rho$ as expressed by the block diagram in Figure~\ref{fig:generalProblem2}. Before stating the problem, some definitions are given below.


\begin{definition} \label{def:stateReconstruction}
     Let $\Omega \subseteq \R^+$, $(\H, \| \cdot \|_{\H})$ be a semi-normed vector space and $\H_c$ a subset of $H^1(\Omega, \H)$. The \textbf{\mbox{partial-reconstructed} set} $\mathcal{R}$ in $\H_c$ of $\rho \in H^1(\Omega, \H)$ is defined as the set:
     
     \begin{equation} \label{eq:stateReconstruction}
         \mathcal{R} = \Argmin_{\bar{\rho} \ \in \ \H_c} \int_{\Omega} \| \rho(t,\cdot) - \bar{\rho}(t, \cdot) \|_{\H}^2 \ dt.
     \end{equation}
     We say that a signal $\hat{\rho}$ is a \textbf{partial-state reconstruction} (P-SR) if $\hat{\rho} \in \mathcal{R}$.
\end{definition}

\begin{remark}
    Note that this definition encapsulates the following cases:
    \begin{itemize}
        \item If $\Omega = {\infty}$ and $\rho \in \H_c$ then $\hat{\rho}(\infty, \cdot) = \rho(\infty, \cdot)$ almost everywhere and this is the definition of an asymptotic state reconstruction.
        \item If $\H_c$ is a vector space and $\rho \not \in \H_c$, then the P-SR $\hat{\rho}$ is the orthogonal projection of $\rho$ on $\H_c$. 
    \end{itemize}
\end{remark}



The state reconstruction problem implies to choose three entities: the space $\H$ referring to the spatial domain of $\rho(t, \cdot)$; a subset $\H_c$ where we seek $\hat{\rho}$ and the semi-norm $\| \cdot \|_{\H}$ on $\H$. We will construct these three entities in the sequel.

We are working on a time window $\Omega = [0, T]$ (we gather measurements during this time period). For the space interval, we are interested in the reconstruction between the first and the last agents. As noted in \eqref{eq:existenceSpace}, we get $\H = L^1_{loc}(\R, [0,1])$. In the case without noise and if we assume that the model~\eqref{eq:C-PDE-MA} is known, we get that $\rho \in \H_1$ where:
\[
    \H_1 = \!\!\begin{array}[t]{l}
        \left\{ \bar{\rho} \in H^1(\Omega, \H) \ | \ \C_{\tilde{y}(t)} \bar{\rho}(t, \cdot) = \C_{\tilde{y}(t)} \rho(t, \cdot) \text{ for } t \in \Omega \text{ and \eqref{eq:PDE} holds for } (t, x) \in \D_{\Omega} \right\} 
    \end{array} 
\]
with $\D_{\Omega} = \left\{ (t,x) \in \Omega \times \R \ | \ x \in [y_1(t), y_N(t)] \right\}$. Note that $\H_1$ is generally not a vector space. As noted by \cite{klibanov2006estimates,gosse2017filtered}, even if $T \to \infty$, $\H_1$ is usually not a singleton since there is no information on the initial condition. However, there always exists at least one weak solution to \eqref{eq:PDE} so $\H_1$ is not empty.

Adding bias on the measurements implies that $\rho$ does not belong to $\H_1$. Consequently, it is better to use Definition~\ref{def:stateReconstruction} with $\H_2 = \left\{ \bar{\rho} \in H^1(\Omega,\H) \ | \ \text{\eqref{eq:PDE} holds for } (t, x) \in \D_{\Omega} \right\}$ and for $\bar{\rho} \in H^1(\Omega, \H)$:
\begin{equation} \label{eq:continuousSemiNormH}
    \| \bar{\rho}(t, \cdot) \|_{\H}^2 =  \left(\C_{\tilde{y}(t)} \bar{\rho}(t, \cdot)\right)^{\top} \left(\C_{\tilde{y}(t)} \bar{\rho}(t, \cdot)\right).
\end{equation}

It is difficult to solve the optimization problem \eqref{eq:stateReconstruction} since the solution to \eqref{eq:PDE} is usually not smooth \citep{bastin2016stability}. One way to overcome this issue is to consider a slightly different partial differential equation:
\vspace*{-0.5cm}
\begin{equation} \label{eq:viscousPDE}
    \frac{\partial \rho}{\partial t} (t,x) + F(\rho(t,x)) \frac{\partial \rho}{\partial x}(t,x) = \gamma^2 \frac{\partial^2 \rho}{\partial x^2}(t,x),
\end{equation}
where $\gamma \in \R$. This equation has many interesting properties and in particular, the solutions are smoother than in \eqref{eq:PDE}. Using \cite[Theorem~14.6]{amann1993nonhomogeneous} with piece-wise $C^{\infty}$ boundary conditions, then
\begin{equation} \label{eq:hatRho}
    \hat{\rho} \in C^1_p\left((0, T], C^2_p(\R, \R)\right).
\end{equation}
Moreover, the vanishing viscosity method applied to this problem \cite[Lemma~4.2]{coclite2013vanishing} shows that the solution to \eqref{eq:viscousPDE} converges in a weak sense to the entropic solution to \eqref{eq:PDE} as $\gamma$ goes to zero. The set $\H_c(\gamma)$ becomes then:
\begin{equation} \label{eq:Hc}
    \H_c(\gamma) = \left\{ \bar{\rho} \in H^1(\Omega, H^2\left(\R, \R)\right) \ | \ \text{\eqref{eq:viscousPDE} holds for } (t, x) \in \D_{\Omega} \text{ and } \gamma \in \R \right\}.
\end{equation}


Using the previous definitions, we can define the partial-reconstructed set $\mathcal{R}$. 

\vspace{0.1cm}

\textbf{Objective:} Find a P-SR of $\rho$ from \eqref{eq:C-PDE-MA} in $\H_c(\gamma)$ for the semi-norm in \eqref{eq:continuousSemiNormH} with the smallest $\gamma^2$.

\vspace{0.1cm}

Since the set $\H_c(\gamma)$ contains an equality constraint, it might be difficult to solve the optimization problem~\eqref{eq:stateReconstruction} with \eqref{eq:continuousSemiNormH}-\eqref{eq:Hc}. Using the Lagrange multiplier $\lambda_{\F} > 0$, we propose the following relaxation:
\vspace*{-0.5cm}
\begin{multline}\label{eq:relaxOptProblem}
    \mathcal{R}_c(\gamma) = \Argmin_{\bar{\rho} \in H^1\left(\Omega, H^2(\R, \R)\right)} \left\{  \int_{\Omega} \| \rho(t, \cdot) - \bar{\rho}(t, \cdot) \|_{\H}^2 \ dt  + \lambda_{\F} \iint_{\D_{\Omega}} \F_{\gamma}(\bar{\rho}, t, x)^2 \ dx \ dt \right\}
\end{multline}
where $\F_{\gamma}(\bar{\rho}, t,x) = \frac{\partial \bar{\rho}}{\partial t}(t,x) + F(\bar{\rho}(t,x)) \frac{\partial \bar{\rho}}{\partial x}(t,x) - \gamma^2 \frac{\partial^2 \bar{\rho}}{\partial x^2}(t, x)$. It follows from the previous discussion that $\mathcal{R} \subseteq \mathcal{R}_c(\gamma)$ and, in a weak-sense $\mathcal{R} = \mathcal{R}_c(\gamma)$. An element of $\mathcal{R}_c(\gamma)$ is a P-SR of $\rho$ solution to \eqref{eq:C-PDE-MA} in $\H_c(\gamma)$ for the semi-norm~\eqref{eq:continuousSemiNormH}. The objective rewrites then as follows.

\vspace{0.1cm}

\textbf{Rephrased objective:} Find a signal $\hat{\rho} \in \mathcal{R}_c(\gamma)$ with the smallest $\gamma^2$.

\vspace{0.1cm}

To the best of the author knowledge, the P-SR solution to an hyperbolic PDE with noisy measurements coming from a network of agents has never been considered in the literature. We propose to solve this using machine learning as explained in the following section.

\section{Learning-based State Reconstruction}

This section is divided into two subsections. In the first one, we derive a P-SR problem associated to \eqref{eq:C-PDE-MA} without noise. In the second one, the same is discussed with addition of noisy measurements.

\subsection{Neural network solution to the P-SR problem without noise}

A neural network of $N_n$ neurons designed for a regression problem is the mapping $\Theta_i(\vec{X}) = \psi(b_i + \theta_i^{\top} \vec{X})$, where $b_{i}$ and $\theta_{i}$ are the bias and the weight tensors respectively. $\psi$ is the element-wise activation function. 
A densely connected deep-neural network is a neural network with $N_L$ layers, meaning that $\Theta = \Theta_1 \circ \Theta_2 \circ \cdots \circ \Theta_{N_L}$. This neural network is trained on the data-set $(\vec{t}, \vec{x}, \rho(\vec{t}, \vec{x}))$, meaning that the weights and biases are found as the arguments that locally minimizes a loss function $\L$. For regression problems, the loss function is usually chosen to be $\frac{1}{N_{\rm{data}}} \sum_{k=1}^{N_{\rm{data}}} \left( \rho(t_k, x_k) - \Theta(t_k, z_k) \right)^2$ where $N_{\rm{data}} \in \mathbb{N}$ is the number of measurements. This is the Mean-Square Error (MSE) between the observed and estimated outputs. Note that this loss function is a discretization of the $L^2$-norm of the error on the whole space $\D_{\Omega}$. 

We cannot measure points all over the domain but only on the image of the operator $\mathcal{C}_{\tilde{y}}$. For $\{t_k\}_{k \in 1,\dots,N_{\rm{data}}}$ \textit{uniformly distributed} in $[0, T]$, $\{(t_j^{\F}, x_j^{\F})\}_{j \in 1,\dots,N_{\F}}$ \textit{uniformly distributed} in $\D_{\Omega}$ and $\lambda_1, \lambda_2, \lambda_{\gamma} > 0$, an equivalent discretization of the cost function in \eqref{eq:relaxOptProblem} is:
\vspace*{-0.3cm}
\begin{equation} \label{eq:costNN1}
    \L_{\vec{t}}(\tilde{y}, \tilde{\rho}, \Theta) = \frac{\lambda_1}{N_{\rm{data}}} \sum_{k=1}^{N_{\rm{data}}} \left\| \tilde{\rho}(t_k) - \C_{\tilde{y}(t_k)} \Theta(t_k, \cdot) \right\|^2 
    + \frac{\lambda_2}{N_{\F}} \sum_{j=1}^{N_{\F}} \left( \F(\Theta, t_j^{\F}, x_j^{\F}) \right)^2 + \lambda_{\gamma} \gamma^2.
\end{equation}
\vspace*{-0.3cm}
$\L$ is divided into three parts: 
\begin{enumerate}
    \item the first term is the traditional MSE over the measurements.
    \item the second term (often called the physics MSE) does not rely on measurements but only depends on the weights and biases of the neural network, it is derived using automatic differentiation \citep{baydin2017automatic} based on the dynamic of the system. As stressed in \cite{raissi2019physics},it can be seen as a regularization agent, preventing over-fitting by disregarding non physical solutions and reducing the impact of the noise by enforcing the model. The choice of the weight $\lambda_2$ depends then on the confidence we have in the model and the sparsity of the dataset.
    \item the minimization of $\gamma^2$ such that the solutions of \eqref{eq:viscousPDE} are close to the solutions of \eqref{eq:PDE}.
\end{enumerate} 

The second term of the previous objective function was introduced by \cite{lagaris1998artificial,psichogios1992hybrid} where the authors used a neural network to provide a solution to a differential equation. It has been investigated more recently in \cite{raissi2019physics} under the name "Physics-informed deep learning" where it has been extended to robust parameter identification.



Using a backpropagation algorithm, we train the neural network by locally solving the following optimization problem:
\vspace*{-0.1cm}
\begin{equation} \label{eq:Theta}
    \mathcal{R}_{N_n} = \Argmin_{b_{1}, \dots, b_{N_L}, \theta_{1}, \dots, \theta_{N_L}} \L_{\vec{t}}(\tilde{y}, \tilde{\rho}, \Theta).
\end{equation}
Using a neural network to solve this problem is reasonable since, as shown in \cite{sirignano2018dgm}, it has a convergence property recalled below.
\begin{theo} \label{theo:convergence}
    In the case $\lambda_{\gamma} = 0$, the following holds:
    \vspace*{-0.3cm}
    \[
        \lim_{N_n, N_{\rm{data}}, N_{\F} \to \infty} \L_{\vec{t}}(\tilde{y}, \tilde{\rho}, \Theta) = 0 \ \text{ and } \  \lim_{N_n, N_{\rm{data}}, N_{\F} \to \infty} \left( \inf_{\rho_1 \in \mathcal{R}_{N_n}, \rho_2 \in \mathcal{R}_c(\gamma)} \iint_{\D_{\Omega}} (\rho_1 - \rho_2)^2 \right) = 0.
    \]
\end{theo}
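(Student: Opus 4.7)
The plan is to adapt the strategy of \citet{sirignano2018dgm} to the present setting. The two key ingredients are (i) a universal approximation theorem for feed-forward neural networks in a Sobolev-type norm strong enough to control every derivative of $\Theta$ appearing in the physics residual $\F_{\gamma}$, and (ii) the convergence of the Monte Carlo quadratures underlying $\L_{\vec{t}}$ to their integral counterparts, which follows from the strong law of large numbers applied to the uniformly sampled points $\{t_k\}$ and $\{(t_j^{\F}, x_j^{\F})\}$.

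For the first limit, I would first argue that the continuous infimum of \eqref{eq:relaxOptProblem} (with $\lambda_{\gamma} = 0$) equals zero: from Proposition~1 and the regularity \eqref{eq:hatRho}, the set $\H_c(\gamma)$ is non-empty for every $\gamma > 0$, and one can construct $\rho^{\star} \in \H_c(\gamma)$ whose trace along the agent trajectories matches $\tilde{\rho}$ (for instance by solving \eqref{eq:viscousPDE} forward with appropriately chosen initial/boundary data). For such $\rho^{\star}$ both integrals in \eqref{eq:relaxOptProblem} vanish. Fix $\e > 0$; by a Hornik-type universal approximation theorem in $H^1(\Omega, H^2(\R, \R))$, valid for smooth, non-polynomial activations $\psi$, there exists for $N_n$ large enough a densely connected network $\Theta^{\star}$ with $\|\Theta^{\star} - \rho^{\star}\|_{H^1(\Omega, H^2(\R))} < \e$. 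The continuous version of $\L_{\vec{t}}$ evaluated at $\Theta^{\star}$ is then $O(\e)$ by the Lipschitz continuity of $\F_{\gamma}$ with respect to this norm (using $F \in C^1$), and the Monte Carlo errors vanish as $N_{\rm{data}}, N_{\F} \to \infty$. Since $\Theta \in \mathcal{R}_{N_n}$ minimizes $\L_{\vec{t}}$ over networks with $N_n$ neurons, $\L_{\vec{t}}(\tilde{y}, \tilde{\rho}, \Theta) \leq \L_{\vec{t}}(\tilde{y}, \tilde{\rho}, \Theta^{\star})$, and sending $\e \to 0$ closes the first limit.

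For the second limit, take a minimizing sequence $\Theta_{N_n} \in \mathcal{R}_{N_n}$. The vanishing of $\L_{\vec{t}}(\Theta_{N_n})$ together with an (explicit or implicit) control of the weights ensures boundedness of $\Theta_{N_n}$ in $H^1(\Omega, H^2(\R))$. By the Rellich--Kondrachov compact embedding, a subsequence converges weakly in this Sobolev space and strongly in $L^2(\D_{\Omega})$ to some $\bar{\rho}$. Lower semi-continuity of the continuous cost in \eqref{eq:relaxOptProblem} with respect to this weak convergence, combined with $\L_{\vec{t}}(\Theta_{N_n}) \to 0$ and the Monte Carlo convergence, forces $\bar{\rho}$ to achieve the continuous infimum, so $\bar{\rho} \in \mathcal{R}_c(\gamma)$. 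Strong $L^2$-convergence on $\D_{\Omega}$ then yields the required $\iint_{\D_{\Omega}} (\Theta_{N_n} - \bar{\rho})^2 \to 0$, with $\rho_2 = \bar{\rho}$.

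The main obstacle is securing the uniform Sobolev bound on the minimizers $\Theta_{N_n}$: because $\L_{\vec{t}}$ only penalizes $\F_{\gamma}(\Theta)$ in $L^2$---and not $\Theta$ or its derivatives directly---an a priori control of $\partial_{xx}^2 \Theta_{N_n}$ is not automatic. In practice one either adds an explicit weight penalty (standard regularization) or exploits the coercivity provided by the viscous term for $\gamma \neq 0$. A secondary technical point is selecting a version of universal approximation that controls $\Theta$, $\partial_t \Theta$, $\partial_x \Theta$ and $\partial_{xx}^2 \Theta$ simultaneously, which restricts the admissible activations to at least $C^2$ non-polynomial functions such as $\tanh$.
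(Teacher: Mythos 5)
Your treatment of the first limit is essentially the paper's argument: construct an exact solution of the viscous equation \eqref{eq:viscousPDE} on $\D_{\Omega}$ whose trace on the agent trajectories matches the data (the paper builds such a $\hat{\rho}_i$ on each strip $[y_i(t),y_{i+1}(t)]$ with Dirichlet data $\rho(t,y_i(t))$ and smooth initial condition, which is exactly your $\rho^{\star}$), invoke a derivative-controlling universal approximation theorem (the paper uses Theorem~3 of Hornik, in the uniform norm on $\D_{\Omega}$ rather than your $H^1(\Omega,H^2(\R))$ norm), bound the physics residual via Young's inequality and the local Lipschitz property of $F$ on the compact range of $\hat{\rho}$, and appeal to the discretized loss being close to its continuous counterpart. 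Where you diverge is the second limit: you extract a weakly convergent minimizing subsequence via Rellich--Kondrachov and use lower semicontinuity, whereas the paper implicitly relies on the fact that the Hornik approximant converges uniformly (with derivatives) to $\hat{\rho}$, which itself achieves zero continuous cost and hence lies in $\mathcal{R}_c(\gamma)$, so one may take $\rho_2=\hat{\rho}$ directly. Your route is more self-contained but, as you correctly flag, hinges on a uniform $H^1(\Omega,H^2(\R))$ bound on the actual minimizers $\Theta_{N_n}$ that the loss does not provide; be aware that the paper does not resolve this either --- it silently identifies the trained minimizer with the constructed approximant, and passing from ``small residual and small boundary mismatch'' to ``small $L^2$ distance to $\mathcal{R}_c(\gamma)$'' really requires a stability estimate for the parabolic problem \eqref{eq:viscousPDE} (available for $\gamma\neq 0$ by energy estimates) that neither proof states. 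So the obstacle you name is genuine, shared with the source, and your compactness argument and a parabolic stability estimate are the two natural ways to close it.
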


The proof is given in Appendix~\ref{app:1}) and is similar to the one of Theorems~7.1 by \cite{sirignano2018dgm}. 
The previous theorem means that $\Theta$ in \eqref{eq:Theta} is an approximation of a P-SR and if the number of neurons tends to infinity, it tends to be a P-SR in $\H_c(\gamma)$. 

\subsection{Neural network solution to the P-SR problem with noise}

First, assume that $n_{\rho}$ is a realization of a Gaussian distribution with mean $\mu$ and standard deviation $\sigma$. The current optimization problem~\eqref{eq:Theta} implies the minimization of a MSE, consequently, the variance of the noise on the agents' trajectories is minimized. The second part of the cost function enforces the signal to be a solution to the noiseless model keeping a low variance as well. Such an implementation performs well and rejects the unbiased noise $n_{\rho}$ \citep{raissi2019physics}. 

In the biased case, if $T$ is large enough, we get $\L_{\vec{t}}(\tilde{y}, \tilde{\rho}, \rho + \varepsilon \mu) = N \left( \sigma^2 + (1-\varepsilon)\mu^2\right) + o_0(\varepsilon)$. Consequently, $\varepsilon$ small and $\mu \neq 0$ yield $\L_{\vec{t}}(\tilde{y}, \tilde{\rho}, \rho + \varepsilon \mu) < \L_{\vec{t}}(\tilde{y}, \tilde{\rho}, \rho)$ and $\rho$ is not a local minimizer. One solution to this issue is to consider instead the loss function $\L_{\vec{t}}(\tilde{y}, \tilde{\rho} - \bar{n}_{\rho}, \Theta)$ where $\bar{n}_{\rho} \in \R^N$.

To deal with the noise $n_y$, the neural network architecture has to be modified. In that case, one need to propose a similar architecture with some physics cost on $\tilde{w}$, leading to the diagram in Figure~\ref{fig:neuralNetwork2}. There are then two neural networks that are trained simultaneously. $\Theta$ is the same as in the previous part and the second one, $\Phi$, estimates $\tilde{y}$. Naturally, the loss function is:
\begin{equation*}
    \tilde{\L}_{\vec{t}}(\tilde{w}, \tilde{\rho}) = \L_{\vec{t}}(\Phi, \tilde{\rho} - \bar{n}_{\rho}, \Theta)
    + \sum_{k=1}^{N_{\rm{data}}} \frac{\lambda_3 \left\| \Phi(t_k) - \tilde{w}(t_k) \right\|^2}{N_{\rm{data}} \times N} 
    + \sum_{l=1}^{N_{\G}} \frac{\lambda_4 \left\| \G(\Phi, t_l^{\G}, \Theta\left( t_l^{\G}, \Phi(t_l^{\G}) \right) \right\|^2}{N_{\G} \times N}
\end{equation*}
where $\L_{\vec{t}}$ is defined in \eqref{eq:costNN1}, $\bar{n}_{\rho} \in \R^N$, $\lambda_3, \lambda_4 > 0$ and $\G(\tilde{y}, t, \tilde{\rho}) = \dot{\tilde{y}}(t) - V(\tilde{\rho}(t))$. 

\begin{remark} The physics cost related to the trajectories of the agents is adding some artificial boundary data. Consequently, the measurements can be sparser for a similar trajectory estimation. 
\end{remark}

\vspace*{-0.5cm}

\begin{figure}
	\centering
	\includegraphics[width=0.9\textwidth]{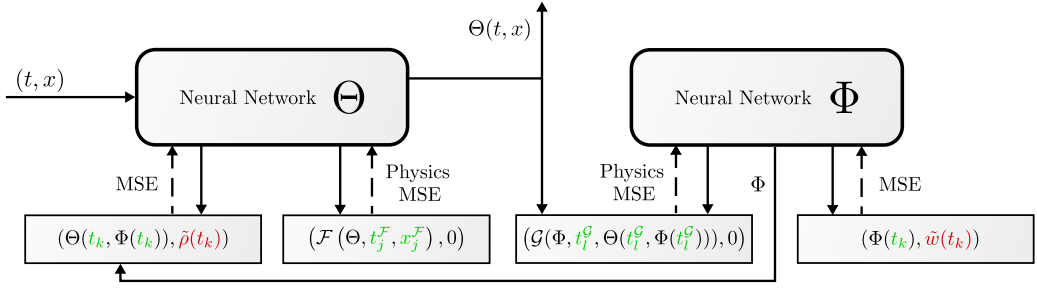}  
	\caption{Block diagram of the neural network for P-SR with noise. The dashed arrows refer to values used for training the neural network. Green variables are a priori choices while the red ones are the measurements.}
	\vspace*{-0.6cm}
	\label{fig:neuralNetwork2}
\end{figure}
%

\subsection{Numerical implementation}

\subsubsection{Network architecture}

It is well-known that some solutions of \eqref{eq:PDE} are discontinuous while solutions of \eqref{eq:viscousPDE} are usually not. Nevertheless, they may have fast variations. The neural network $\Theta$ must handle both shock and rarefaction waves while $\Phi$ represents an absolutely continuous function.

$\Phi$ takes one input (the time $t$) and has $N$ outputs (the positions of the agents). Since the solution is absolutely continuous (it is piece-wise $C^1$), the best is to combine some $\ReLu$ and $\tanh$ activation functions. To keep a simple structure, one can then consider two dense neural networks with different activation functions such as $\ReLu$ and $\tanh$. They are summed and weighed at the end to give the final output. There are $3$ hidden layers, each with $2N$ neurons for each sub-network. 

For $\Theta$, the architecture is more complex. Since we are considering a viscous PDE, the solutions are continuous and using the $\tanh$ as an activation function seems a good choice for several reasons:
\begin{enumerate}
    \item A dense neural network with $\tanh$ activation functions can deal with almost-shock waves since $\Theta(t,x) = \tanh \left( k T_s(t,x) \right)$ where $T_s$ is the trajectory of a shock wave. If $k$ is chosen very large, then there is a very steep slope along the curve $T_s(t,x) = 0$.
    \item It can deal with rarefaction waves as well since in this case the solution is $F^{-1}(x / t)$ \cite[Section~11.2]{evans1998partial} which is continuous since $F$ is $C^1$ and $F' < 0$. 
\end{enumerate}
Consequently, the number of nodes can be related to the number of waves in the solution. On the contrary, the number of layers deals with the inference between these waves. In conclusion, the number of nodes is related to the length-space while the number of layers reflects the time-window. \cite{choromanska2015loss} showed that having a neural network with many variables usually leads to better convergence. As said previously, the neural network should not overfit the data because of the regularization agent so we can consider a large network with $8 T /100$ layers made up of $20 L / 7000$ neurons each. 

\subsubsection{Training procedure}

First of all, since the space and time intervals are not of the same order of magnitude, it is better to standardize the dataset before training. Indeed, that leads to a faster convergence of the optimization procedure to a more optimal solution \citep{Goodfellow-et-al-2016}.

Similarly to what was done in \cite{raissi2019physics,lagaris1998artificial,sirignano2018dgm}, the best solver for this problem seems to be the BFGS algorithm \cite[Chap 6]{nocedal2006numerical}. Nevertheless, it is quite slow so one can consider speeding it up with a pre-training using Adam \citep{kingma2015adam}. Another issue comes with the choice of the different $\lambda$. It appears that the effect of this choice decreases when we split the training into several steps, leading to the following procedure:
\begin{enumerate}
    \item Let $\lambda_1 = \lambda_2 = \lambda_{\gamma} = 0$ and $\lambda_3 = 1$, $\lambda_4 = 0.5$. We then estimate precisely the trajectories of the agents and we modify the value of the density along these curves. We train using the BFGS algorithm with low requirements such that the convergence is rapidly obtained.
    \item Then, we train the value of the density without modifying the trajectories. For that, we fix the $\Phi$ neural network and we use first Adam and then BFGS with $\lambda_1 = 1$, $\lambda_2 = 0.1$, $\lambda_{\gamma} = \lambda_3 = 0$ and $\lambda_4 = 0.25$. This step takes more time and fixes the values of the density at its boundaries while starting to enforce the physics of the PDE inside the domain.
    \item Finally, we optimize using BFGS only and $\lambda_1 = 1$, $\lambda_2 = 1$, $\lambda_{\gamma} = 0.1$, $\lambda_3 = 1$ and $\lambda_4 = 0.5$. We focus on this step on the noise reduction and we minimize the coupled system proposed in Figure~\ref{fig:neuralNetwork2}. Since the trajectories are close to their optimal values, this step changes mostly the values of the density inside the domain, trying to keep a low value of $\gamma$. 
\end{enumerate}
This procedure seems, in general, to be a good starting point for such a problem since it follows the iterative procedure described in \cite{bertsekas2014constrained}. The weights values have to be designed in each case and the one used here leads to the lowest computed generalization error in our case.

\section{Traffic flow theory and simulation results}
\label{sec:trafficFlow}

\subsection{Traffic flow theory}

The simplest model for traffic flow on a highway is given by the continuity equation, which is nothing more than the conservation of mass \cite[Chapter~11]{evans1998partial}. This model has been derived by  \cite{lighthill1955kinematic,richards1956shock} and it results in the so-called LWR model:
\begin{equation} \label{eq:continuity}
        \frac{\partial \rho}{\partial t}(t,x) + \frac{\partial \rho}{\partial x} (\rho V)(t, x) = 0, \quad \quad (t,x) \in \R^+ \times \R
\end{equation}
where $\rho: \R^+ \times \R \to [0, 1]$ is the normalized density of cars. It can be represented as the probability distribution of a car at a given position $x$ at time $t$. $V: [0,1] \to \R^+$ stands for the equilibrium velocity of a car depending on the density around. The LWR model assumes, similarly to \cite{greenshields1935study}, that $V = V_f (1-\rho)$, where $V_f > 0$ is the free flow speed. 
%
%
With this choice, \eqref{eq:continuity} transforms into the following scalar hyperbolic PDE for $(t,x) \in \R^+ \times \R$:
\begin{equation} \label{eq:LWR}
        \frac{\partial \rho}{\partial t} (t,x) + V_f \left( 1 - 2\rho(t, x) \right) \frac{\partial \rho}{\partial x}(t, x) = 0.
\end{equation}

In the context of traffic flow, we are considering that there are some vehicles -- called Probe Vehicles (PV) -- capable of sensing the density around them using the distances to their neighbors, cameras \citep{chellappa2004vehicle,9081940} or radar sensors \citep{1608034}. The state reconstruction problem without noise is solved in finite-time by \cite{delle2019traffic,barreau2020dynamic} for instance. The results in our case are shown in the following subsection. A full study on this example is conducted in \cite{liu2020learning}.

\subsection{Simulation results}
\label{sec:results}

We are studying here the P-SR problem of \eqref{eq:LWR} using probe vehicles. The real density is obtained from a Godunov scheme \citep{leveque1992numerical} with random initial and boundary conditions. One can see in Figure~\ref{fig:godunov} that shocks and rarefaction waves are making the reconstruction difficult. We did the reconstruction in two cases with a neural network coded using Tensorflow~1.15 \citep{tensorflow2015-whitepaper,github}. The first scenario considers the system without noise; the result is displayed in Figure~\ref{fig:error}. The second case contemplates $n_{\rho}$ as an unbiased Gaussian noise with a standard deviation of $\sigma_{\rho} = 0.2$ and $n_y$ as a Brownian motion generated from a normal distribution with zero mean and a variance $\sigma_{y} = 2$ (see Figure~\ref{fig:errorNoise}). $20$ P-SR are done using a naive BFGS \cite{raissi2019physics,sirignano2018dgm} and $20$ others using the training procedure described earlier. The computations are done on a Intel Core i5-8365 CPU @ 1.60GHz and 16 GB of RAM computer. The computation time and the normalized error are displayed in Figure~\ref{fig:stats}.


\begin{figure}
    \centering
    \includegraphics[width=0.5\linewidth]{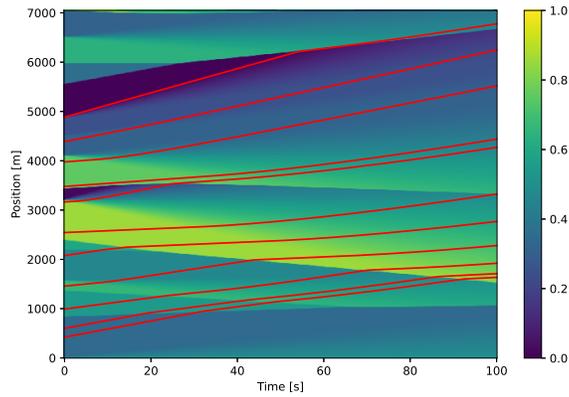}
    \vspace*{-0.5cm}
    \caption{Real density obtained using the Godunov scheme with random initial and boundary conditions. The red lines are the trajectories of the PV.}
    \vspace*{-0.6cm}
    \label{fig:godunov}
\end{figure}

\begin{figure*}
\floatconts
  {fig:rho_example}
  {\vspace*{-0.7cm}\caption{Absolute value of the error between the P-RS and the real one. The red lines refer to the real trajectories of the PV and the orange ones are the reconstructed ones.}}
  {%
    \subfigure[Scenario without noise]{\label{fig:error}%
      \includegraphics[width=0.4\linewidth]{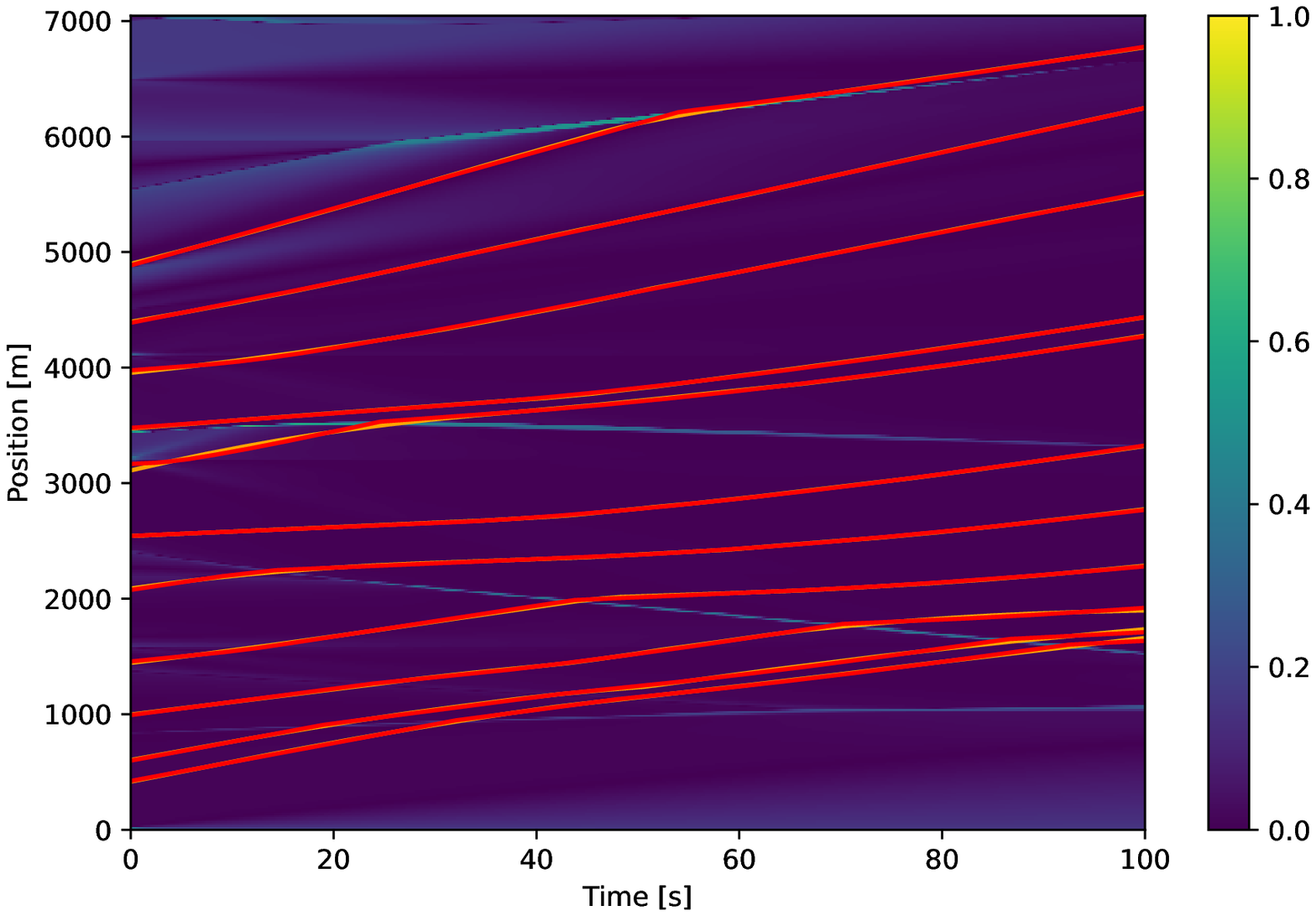}}%
    \quad
    \subfigure[Scenario with noise]{\label{fig:errorNoise}%
      \includegraphics[width=0.4\linewidth]{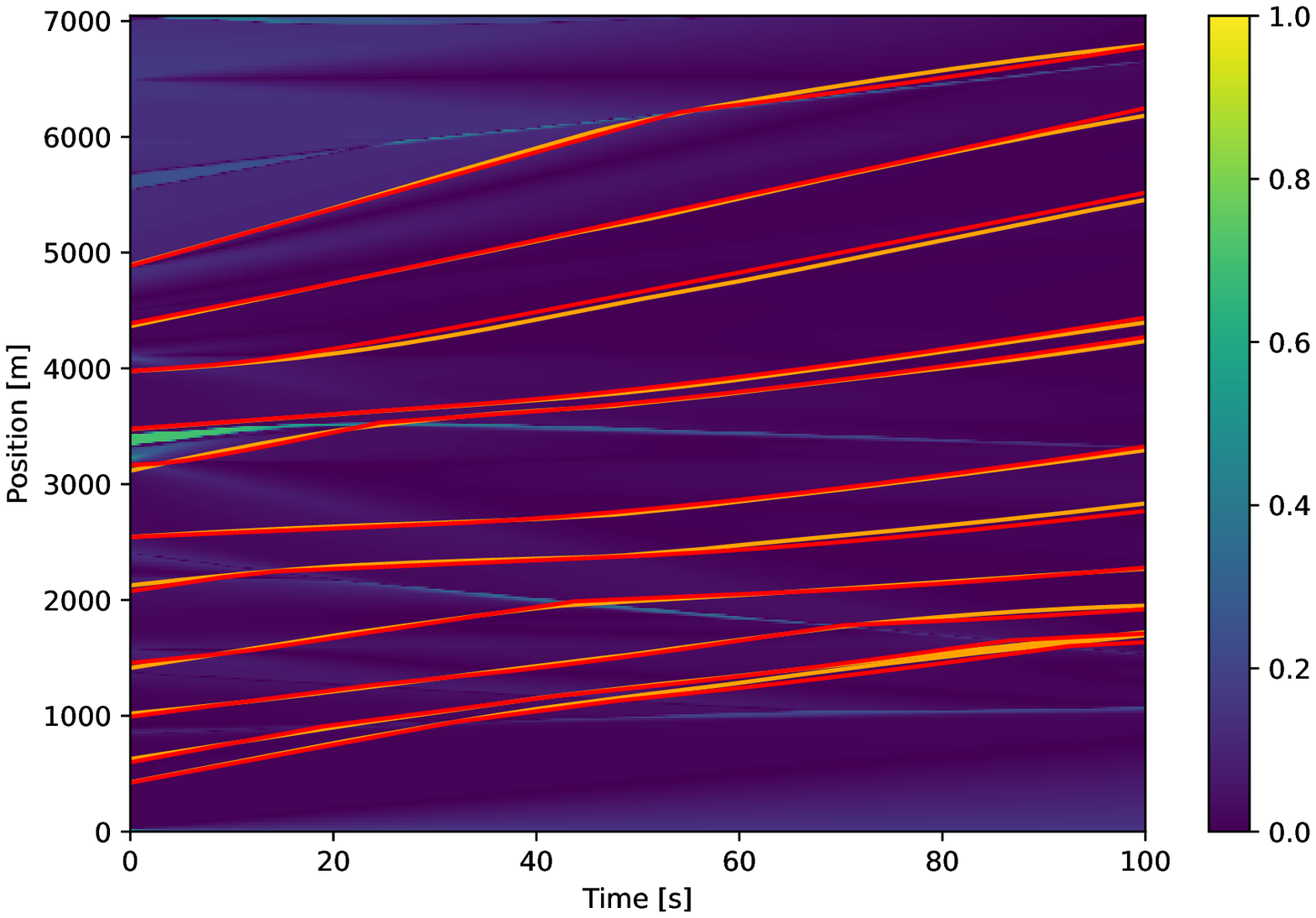}}%
  }
  \vspace*{-0.6cm}
\end{figure*}

\begin{figure*}
\hspace*{-1cm}
\floatconts
  {fig:stats}
  {\vspace*{-0.7cm}\caption{Computation time and error depending on the training algorithm.}}
  {%
    \subfigure[Computation time in seconds]{\includegraphics[width=0.4\linewidth]{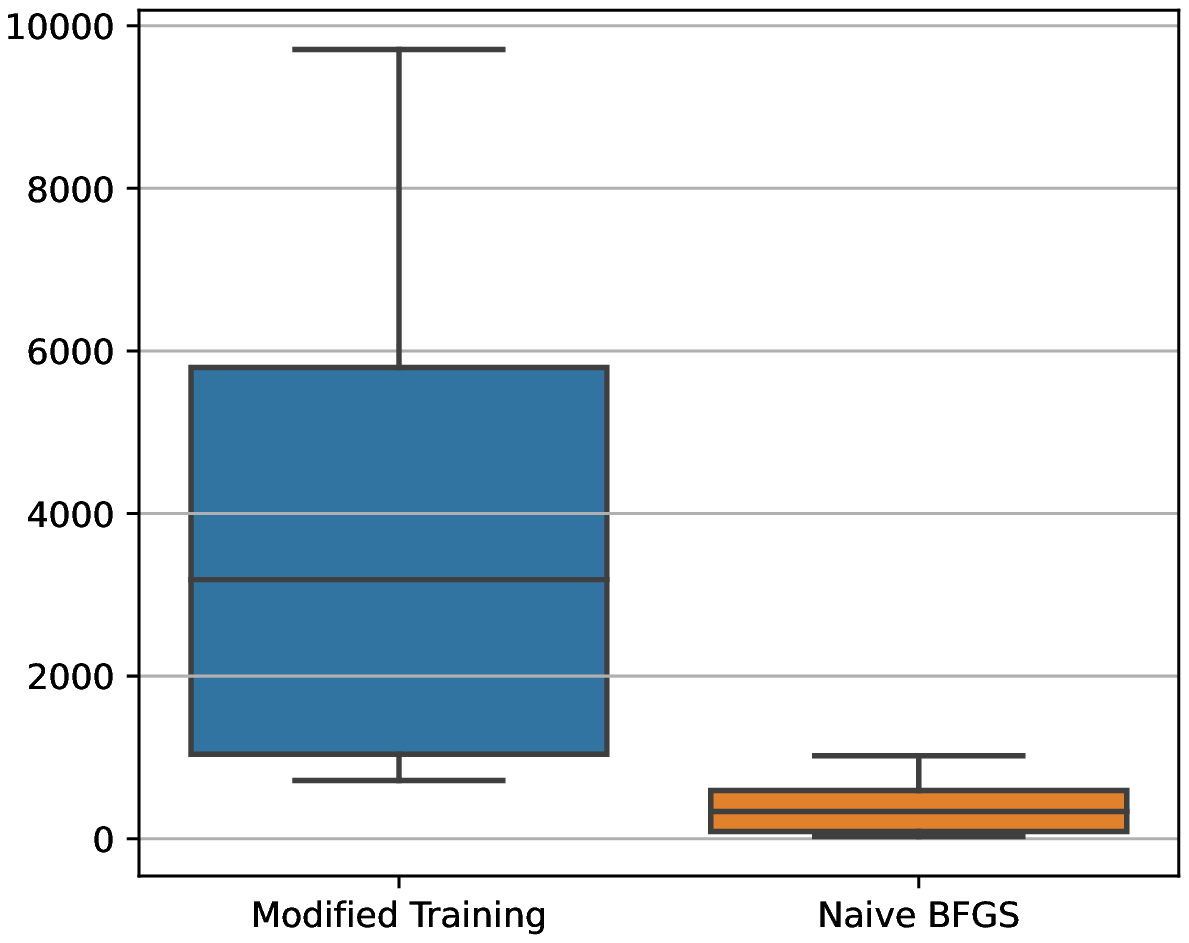}}%
    \quad \quad
    \subfigure[Normalized generalization error]{\includegraphics[width=0.4\linewidth]{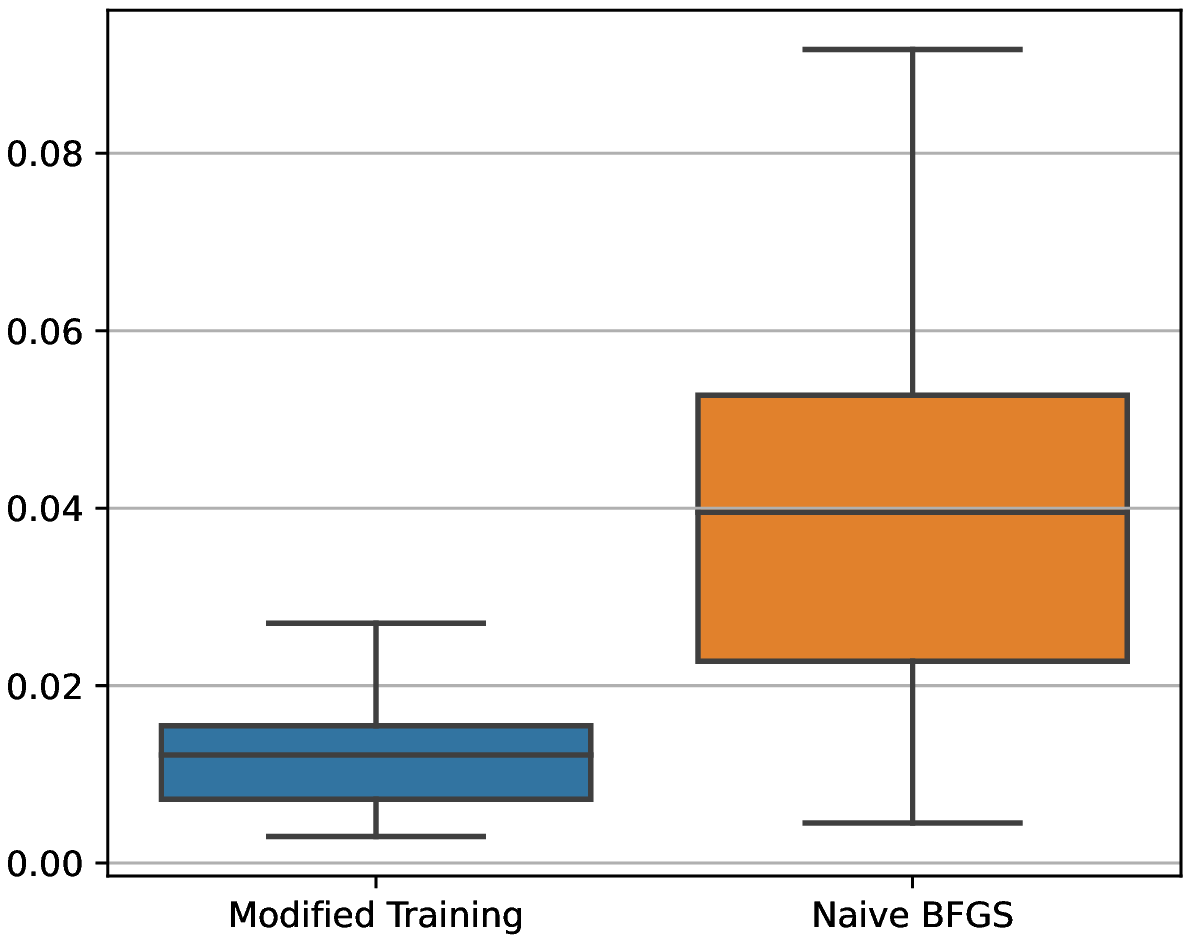}}%
  }
\end{figure*}

One can see in Figure~\ref{fig:error} that the reconstruction in the first case is almost perfect. There are some discrepancies for small-time $t$ because of the effect of the unknown initial condition. But, after a while, the reconstruction error is close to zero between the PV except on the shock trajectories where the neural network is unable of reproducing the steep variation. In the second case (see Figure~\ref{fig:errorNoise}), the error is larger. The PV trajectories are not as sharp as in the first case because of the noise correction. Nevertheless, the P-SR is very close to the one obtained previously, showing that the proposed solution is indeed capable of reducing the noise in the measurements. The modified training has a larger computation time ($10$ times larger in average) but a smaller and more consistent generalization error ($4$ times smaller in average with a standard deviation is $3$ times smaller). That means the modified training procedure is indeed more accurate and robust to the neural network initialization at the price of a much larger computation time.


\section{Conclusion \& Perspectives}

In this paper, we proposed the partial-state reconstruction problem of a hyperbolic equation using only Lagrangian measurements and we applied it to the special example of traffic flow estimation. To that extend, we used physics-informed deep learning to counterbalance the small number of measurements and increase the robustness. The simulations showed that noise was indeed rejected.

This is preliminary work and we propose many perspectives. First of all, the neural network can be improved by considering a more specific architecture \citep{hochreiter1997long}. One can also look into the direction of a decentralized reconstruction algorithm, that would effectively reduce the computational burden while increasing the sensitivity to noise. The roles of the $\lambda$ have not been explored in the paper, however, it appears that the speed of convergence highly depends on these values. A theoretical study on the role of these multipliers should be done to get a better understanding of the optimization process. Finally, one can also consider second-order models which are more adapted to traffic flow \citep{aw1995,zhang2002}. 

\acks{The research leading to these results is partially funded by the KAUST Office of Sponsored Research under Award No. OSR-2019-CRG8-4033, the Swedish Foundation for Strategic Research, the Swedish Research Council and Knut and Alice Wallenberg Foundation.}

\appendix

\section{Proof of Theorem~\ref{theo:convergence}}
\label{app:1}

First of all, note that for a number of physical and measurement points sufficiently large, $\L_{\vec{t}}(\tilde{y}, \tilde{\rho}, \Theta)$ with $\lambda_{\gamma} = 0$ is arbitrary close to $J(\rho, \Theta)$ defined as:
\[
    J(\rho, \bar{\rho}) = \int_{\Omega} \| \rho(t, \cdot) - \bar{\rho}(t, \cdot) \|_{\H}^2 \ dt  + \lambda_{\F} \iint_{\D_{\Omega}} \F_{\gamma}(\bar{\rho}, t, x)^2 \ dx \ dt.
\]
We will then consider the minimization of $J$ instead of the minimization of $\L_{\vec{t}}$ in the sequel.

We follow a proof similar to the one of Theorem~7.1 by \cite{sirignano2018dgm}. Let us first denote by $\hat{\rho}_i$ for $i \in \{1, \dots, N-1\}$ the solutions of the following systems for $t \in [0, T]$:
\begin{equation} \label{eq:heatEq}
    \left\{
        \begin{array}{ll}
            \displaystyle \frac{\partial \hat{\rho}_i}{\partial t} (t,x) + F(\rho(t,x)) \frac{\partial \hat{\rho}_i}{\partial x}(t,x) = \gamma^2 \frac{\partial^2 \hat{\rho}_i}{\partial x^2}(t,x), & x \in [y_i(t), y_{i+1}(t)], \\
            \displaystyle \hat{\rho}_i(t, y_i(t)) = \rho(t, y_i(t)), \ \hat{\rho}_i(t, y_{i+1}(t)) = \rho(t, y_{i+1}(t)), \\
            \hat{\rho}^0 \in C^{\infty}([y_i(0), y_{i+1}(0), \R]).
        \end{array}
    \right.
\end{equation}
We want to show that the minimum value of $J(\rho, \Theta)$ when $N_n \to \infty$ is zero. Since $\F_{\gamma}(\hat{\rho}, t, x) = 0$, we then get
\[
    \begin{array}{rl}
        \displaystyle J(\rho, \Theta) \!\!\!&\displaystyle= \int_{\Omega} \| \rho(t, \cdot) - \Theta(t, \cdot) \|_{\H}^2 \ dt  + \lambda_{\F} \iint_{\D_{\Omega}} \F_{\gamma}(\Theta, t, x)^2 \ dx \ dt \\
        &\displaystyle= \int_{\Omega} \| \hat{\rho}(t, \cdot) - \Theta(t, \cdot) \|_{\H}^2 \ dt  + \lambda_{\F} \iint_{\D_{\Omega}} \left( \F_{\gamma}(\Theta, t, x) - \F_{\gamma}(\hat{\rho}, t, x) \right)^2 \ dx \ dt \\
        &\displaystyle= \int_{\Omega} \| \hat{\rho}(t, \cdot) - \Theta(t, \cdot) \|_{\H}^2 \ dt  + \lambda_{\F} \iint_{\D_{\Omega}} \left( \F_{\gamma}(\Theta, t, x) - \F_{\gamma}(\hat{\rho}, t, x) \right)^2 \ dx \ dt \\
        &\displaystyle\leq \int_{\Omega} \| \hat{\rho}(t, \cdot) - \Theta(t, \cdot) \|_{\H}^2 \ dt  + 2 \lambda_{\F} \iint_{\D_{\Omega}} \left( \Theta_t(t, x) - \hat{\rho}_t(t, x) \right)^2 \ dx \ dt \\
        &\displaystyle \hfill + 2 \lambda_{\F} \iint_{\D_{\Omega}} \left( F(\Theta(t, x)) \Theta_x(t, x) - F(\hat{\rho}(t, x)) \hat{\rho}_x(t, x) \right)^2 \ dx \ dt \\
        &\displaystyle \hfill + 2 \gamma^4 \lambda_{\F} \iint_{\D_{\Omega}} \left( \Theta_{xx}(t, x) - \hat{\rho}_{xx}(t, x) \right)^2 \ dx \ dt \\
        &\displaystyle\leq \int_{\Omega} \| \hat{\rho}(t, \cdot) - \Theta(t, \cdot) \|_{\H}^2 \ dt  + 2 \lambda_{\F} \iint_{\D_{\Omega}} \left( \Theta_t(t, x) - \hat{\rho}_t(t, x) \right)^2 \ dx \ dt \\
        &\displaystyle \hfill + 4 \lambda_{\F} \iint_{\D_{\Omega}} \left( F(\Theta(t, x)) - F(\hat{\rho}(t, x)) \right)^2 \hat{\rho}_x(t, x)^2 \ dx \ dt \\
        &\displaystyle \hfill + 4 \lambda_{\F} \iint_{\D_{\Omega}} F(\Theta(t, x))^2 \left( \Theta_x(t, x) - \hat{\rho}_x(t, x) \right)^2 \ dx \ dt \\
        &\displaystyle \hfill + 2 \gamma^4 \lambda_{\F} \iint_{\D_{\Omega}} \left( \Theta_{xx}(t, x) - \hat{\rho}_{xx}(t, x) \right)^2 \ dx \ dt \\
    \end{array}
\]
The last inequalities are obtained using Young inequality. Since $\hat{\rho}$ lies in a compact set and $F \in C^1(\R, \R)$, there exists $M > 0$ such that $| F(\Theta) - F(\hat{\rho}) |^2 \leq M | \Theta - \hat{\rho} |^2$. That yields:
\[
    \begin{array}{rl}
        \displaystyle J(\rho, \Theta) \!\!\!&\displaystyle\leq \int_{\Omega} \| \hat{\rho}(t, \cdot) - \Theta(t, \cdot) \|_{\H}^2 \ dt  + 2 \lambda_{\F} \iint_{\D_{\Omega}} \left( \Theta_t(t, x) - \hat{\rho}_t(t, x) \right)^2 \ dx \ dt \\
        &\displaystyle \hfill + 4 \lambda_{\F} M^2 \left( \max_{(t, x) \in \D_{\Omega}} |\hat{\rho}_x(t, x)|^2 \right) \iint_{\D_{\Omega}} \left( \Theta(t, x) - \hat{\rho}(t, x) \right)^2 \ dx \ dt \\
        &\displaystyle \hfill + 4 \lambda_{\F} \left( \max_{(t, x) \in \D_{\Omega}} F(\Theta(t, x))^2 \right) \iint_{\D_{\Omega}} \left( \Theta_x(t, x) - \hat{\rho}_x(t, x) \right)^2 \ dx \ dt \\
        &\displaystyle \hfill + 2 \gamma^4 \lambda_{\F} \iint_{\D_{\Omega}} \left( \Theta_{xx}(t, x) - \hat{\rho}_{xx}(t, x) \right)^2 \ dx \ dt \\
    \end{array}
\]
The regularity of $\hat{\rho}$ expressed by \eqref{eq:hatRho} implies that Theorem~3 by \cite{hornik1991approximation} holds. That yields for any $\varepsilon > 0$, there exists $N_n$ sufficiently large such that
\begin{multline*}
    \hspace*{-0.4cm}\max_{(t, x) \in \D_{\Omega}} \left| \frac{\partial \hat{\rho}}{\partial t}(t,x) - \frac{\partial \Theta}{\partial t}(t,x) \right| + \max_{(t, x) \in \D_{\Omega}} \left| \hat{\rho}(t,x) - \Theta(t,x) \right| \\
    \quad \quad \quad + \max_{(t, x) \in \D_{\Omega}} \left| \frac{\partial \hat{\rho}}{\partial x}(t,x) - \frac{\partial \Theta}{\partial x}(t,x) \right| + \max_{(t, x) \in \D_{\Omega}} \left| \frac{\partial^2 \hat{\rho}}{\partial x^2}(t,x) - \frac{\partial^2 \Theta}{\partial x^2}(t,x) \right| \leq \varepsilon.
\end{multline*}
That leads to
\[
    \displaystyle J(\hat{\rho}, \Theta) \leq |\D_{\Omega}| \lambda_{\F} \left\{ \frac{1}{|\lambda_{\F} \D_{\Omega}|} + 2  + 4 M^2 \left( \max_{(t, x) \in \D_{\Omega}} |\hat{\rho}_x(t, x)|^2 \right) + 4 \left( \max_{(t, x) \in \D_{\Omega}} F(\Theta(t, x))^2 \right) + 2 \gamma^4 \right\} \varepsilon^2.
\]
Because of the regularity properties of $\hat{\rho}$ (see \eqref{eq:hatRho}) and $\Theta$, we indeed get that $\lim_{N_n \to \infty} J(\rho, \Theta) = 0$. That means in particular that $\F_{\gamma}(\Theta, t, x) = 0$ almost everywhere and $\Theta \in \H_c(\gamma)$.

\begin{remark}
    Considering Theorem~3 of \cite{sirignano2018dgm}, one can even show that $\lim_{N_n \to \infty} \Theta$ is a weak-solution to \eqref{eq:heatEq}.
\end{remark}

\bibliography{biblio}

\end{document}